\DeclareMathAlphabet{\mathpzc}{OT1}{pzc}{m}{it}
\theoremstyle{plain}
\newtheorem{theorem}{\scshape Theorem}[section]
\newtheorem{lemma}[theorem]{\scshape Lemma}
\theoremstyle{definition}
\newtheorem{definition}[theorem]{\scshape Definition}
\newtheorem{remark}[theorem]{\scshape Remark}
\def\<{\langle}
\def\>{\rangle}
\def\H{{:\!\mathcal H\!:}}
\def\N{\mathbb{N}}
\def\P{\mathbb{P}}
\def\R{\mathbb{R}}
\def\Z{\mathbb{Z}}
\def\1{\mathbf{1}}
\def\inte#1{
\displaystyle\mathop{#1\kern0pt}^\circ
}
\newcommand{\beq}{\begin{equation}}
\newcommand{\eeq}{\end{equation}}
\newcommand{\ben}{\begin{eqnarray}}
\newcommand{\een}{\end{eqnarray}}
\newcommand{\beno}{\begin{eqnarray*}}
\newcommand{\eeno}{\end{eqnarray*}}
\let\b=\beta
\let\G= \Gamma
\def\dB{\dot{B}}
\def\dH{\dot{H}}
\def\cF{{\mathcal F}}
\def\virgp{\raise 2pt\hbox{,}}
\def\cdotpv{\raise 2pt\hbox{;}}
\def\C{\mathop{\mathbb C\kern 0pt}\nolimits}
\def\DD{\mathop{\mathbb D\kern 0pt}\nolimits}
\def\EE{\mathop{\mathbb E\kern 0pt}\nolimits}
\def\K{\mathop{\mathbb K\kern 0pt}\nolimits}
\def\N{\mathop{\mathbb  N\kern 0pt}\nolimits}
\def\Q{\mathop{\mathbb  Q\kern 0pt}\nolimits}
\def\R{{\mathop{\mathbb R\kern 0pt}\nolimits}}
\def\SS{\mathop{\mathbb  S\kern 0pt}\nolimits}
\def\St{\mathop{\mathbb  S\kern 0pt}\nolimits}
\def\Z{\mathop{\mathbb  Z\kern 0pt}\nolimits}
\def\ZZ{{\mathop{\mathbb  Z\kern 0pt}\nolimits}}
\def\H{{\mathop{{\mathbb  H\kern 0pt}}\nolimits}}
\def\PP{\mathop{\mathbb P\kern 0pt}\nolimits}
\def\TT{\mathop{\mathbb T\kern 0pt}\nolimits}
\newcommand{\andf}{\quad\hbox{and}\quad}
\def\dive{\mathop{\rm div}\nolimits}
\def\Supp{\mathop{\rm Supp}\nolimits\ }
\begin{document}

\title{A Lower  Bound Estimate  of Life Span of Solutions to Stochastic 3D Navier-Stokes Equations with Convolution Type Noise}
\maketitle

\author{Siyu Liang \footnotemark[1]\footnotemark[2]\footnotemark[3]
        }
        \footnotetext[1]{Academy of Mathematics and Systems Science, Chinese Academy of Sciences, Beijing 100190, China}
        \footnotetext[2]{School of Mathamatical Sciences,  University of Chinese Academy of Sciences, Beijing 100049, China}
        \footnotetext[3]{Department of Mathematics,  University of Bielefeld, D-33615 Bielefeld, Germany}

\begin{abstract}
In this paper we investigate the stochastic 3D Navier-Stokes equations perturbed  by linear multiplicative Gaussian noise of convolution type by transformation to random PDEs.   We are not interested  in the regularity of the initial data.    We focus on obtaining bounds from below for the life span associated with regular initial data.
 The key point of the proof is the fixed point argument.
\end{abstract}

\textbf{Key words: stochastic PDEs,  Navier-Stokes,   random PDEs,   life span, fixed point}

\section{Introduction}

Consider the following stochastic $3D$ Navier--Stokes equation 
 \begin{equation}\label{C3S1eq1}
\left\{
\begin{aligned}
& du+(u\cdot\nabla u-\Delta u )dt=\sum\limits_{i=1}^{n}(B_i(u)+\lambda_{i}u)d\beta_{i}(t)-\nabla p dt, \\
&\dive u=0, \\
&u\mid_{t=0}=u_{0},
\end{aligned}
\right .
\end{equation}
 on the whole space $\mathbb{R}^3$,
where $\beta_{i}(t)$, $i=1,...,n$ are one dimensional independent Brownian motions on a given probability space $(\Omega, 
\mathcal{F},\P)$, $\lambda_{i}$, $i=1,...,n$,  are non-zero constants and $B_i$ , $i=1,...,n$  are the convolution operators such that 
$$
B_i(u)(\xi)=\int_{\mathbb{R}^3}h_i(\xi-\bar\xi)u(\bar\xi)d\bar\xi
=(h_i*u)(\xi),\ \xi\in\mathbb{R}^3 ,$$
where $h_i\in L^1(\mathbb{R}^3),\ i=1,2,...,n,$  and $\Delta$ is the (weak) Laplacian on $(L^2(\mathbb{R}^3))^3$. The vorticity form of this system has been investigated in
\cite{BARBU20175395} by Barbu and R\"{o}ckner,   where the authors
prove the existence and uniqueness in $(L^p(\mathbb{R}^3))^3$,
$\frac32<p<2$, of a    global mild solution to  random vorticity
equations associated to stochastic $3D$ Navier--Stokes equations
for sufficiently small initial vorticity. In their paper the smallness of the initial values  depend on the whole Brownian path, hence the solutions obtained  
are not adapted.   
In the paper \cite{Rockner2019} by 
R\"{o}ckner, R. C.  Zhu and X. C. Zhu,   the authors prove that  the solution satisfies the vorticity  equation
with the stochastic integration being understood in the sense of the integration of controlled rough paths. 
 In the further paper \cite{Munteanu} , the authors generalize this result to gradient-type noise in 2 or 3 dimensions by a  different type of transformation which is adapted to their   noise.   Then they also obtain the existence of a solution adapted to the Brownian filtration up to some stopping time.

We do not assume the initial values are small. 
Instead we only assume initial data are smooth. 
In other words,    the initial data are in  Sobolev spaces $H^N$ for any $N\in \N$.  
We focus on the life span of the  $\dH^{\frac{1}{2}+\gamma}$-mild  solution  on the fixed path (see Definition
\ref{mildsolution} and Theorem \ref{C3S3thm1} for the definitions of the mild solution and life span).

\textbf{ Main result of the paper (Theorem  \ref{C3S3thm1}):  }
we obtain  a lower bound estimate \eqref{C3S3eq13} of the life span.

For the deterministic classical $3D$ Navier--Stokes equations,  by Fujita--Kato's fixed point procedure,  global wellposedness
results have  been obtained in certain scaling invariant spaces (one also calls them critical spaces). 
For general initial values,  the solution may  blow up (in the sense of strong solutions) after some time.
For some spaces which are above the critical spaces,  a
fixed point procedure (Picard's contraction principle) can also be applied to study the blow up time of the solution.   We refer to  Chapter 15 in the  book \cite{lemarie2002recent} for such results for the Sobolev spaces 
$\dH^s$ and $L^p$, for $s \geq\frac{1}{2}$ and $p\geq 3$.
Also in Poulon's  paper \cite{Poulon2014},  the author introduces the notion of the minimal blow up Navier--Stokes solutions.  The authors also  show that the set of such solutions is not only nonempty but also compact in a certain sense.
Based on this,  a lower bound estimate of the maximal time up to which the solution 
remains regular for  initial values in the space 
$\dH^s$,  $\frac{1}{2}<s<\frac{3}{2}$, is obtained in  Proposition 1.1 of a later paper \cite{chemin2019}
by J. -Y. Chemin and  I. Gallagher.
Note that if we find some time $T$ and a solution in the space $L^{\infty}([0,T],  \dH^s)$,
for $\frac{1}{2}<s<\frac{3}{2}$,  then by Sobolev's embedding theorem we know that the solution up to time $T$ satisfies Serrin's condition,  i.e.
$$u\in L^q([0,T]; L^p(\mathbb{R}^{3})) \text{  for  some   } p,q  \text{ which satisfy   } \frac{2}{q}+\frac{3}{p}\leq 1.
$$
In other words,   it is a strong solution up to time $T$.  
For deterministic anisotropic $3D$ Navier--Stokes equations,  
in \cite{LIANG2020},
we also study the maximal time up to which the solutions remain regular.

The idea of the proof   of    Theorem  \ref{C3S3thm1}   is that we first apply the transformation in  \cite{BARBU20175395} to  transform the equation to a random PDE and write
the solution in the form of mild solutions.  
Then we apply Littlewood-Paley theory,  thanks to the commutativity  of the convolution (this is the reason why we have to limit our noise to  convolution type multiplicative noise),  our convolution operator $B_i$ and  transformation operator $\Gamma$  operate on any Besov space.
Afterwards,  due to the   contraction property of the semigroup $e^{t\Delta}$, we  obtain the estimates necessary  for the
fixed point argument.    This method  has also been  used to calculate a lower bound of the time $T$ up to which the regular solution exists  for deterministic $3D$ Navier--Stokes equations.

In this paper we use $C$ to denote the constant which can be different from line to line.
And we use the notation `$A\lesssim B$' to mean that
there is some constant $C$ such taht $A\leq CB$.\\

\section{Function Spaces on $\R^3$}\label{sec2}
Denote by $\mathcal{S}(\mathbb{R}^3)$ the Schwartz space and 
$\mathcal{S}'(\mathbb{R}^3)$ its dual space.\\
On $\mathbb{R}^{3}$, we recall the  \textbf{non-homogeneous Sobolev spaces}:
\beq\label{defhs}
H^{s}(\mathbb{R}^{3}):=\Bigl\{u\in \mathcal{S}'(\mathbb{R}^{3});\parallel u \parallel_{H^{s}(\mathbb{R}^{3})}^{2}:=\int_{\mathbb{R}^{2}}(1+\mid \xi \mid^{2})^s\mid \hat{u}(\xi)\mid^{2}d\xi<\infty\,\Bigr\},   
\eeq
where $s\in\R$, 
 and 
 $$\hat{u}(\xi)=\mathcal{F}u(\xi):=\int_{\R^3}u(x)e^{- ix\cdot \xi }dx,
   $$
  denotes the Fourier transform of $u$ on $\R^3$.\\
Then    $H^{s}(\mathbb{R}^{3})$ is a Hilbert space with $H^{-s}(\mathbb{R}^{3})$ as its dual space.

\noindent
On $\mathbb{R}^{3}$, we recall the \textbf{homogeneous Sobolev spaces}:
$$\dH^{s}(\mathbb{R}^{3}):=\Bigl\{u\in \mathcal{S}'(\mathbb{R}^{3}),
\hat{u}\in L^1_{loc}(\mathbb{R}^{3}) ;\parallel u \parallel_{\dH^{s}(\mathbb{R}^{3})}^{2}:=\int_{\mathbb{R}^{3}}\mid \xi \mid^{2s}\mid \hat{u}(\xi)\mid^{2}d\xi<\infty\,\Bigr\}, $$
where $\hat{u}$ denotes the Fourier transform of $u$.

\section{Transform to Random PDEs}
We use the same transform as  in \cite{BARBU20175395}, but instead of vorticity equation, we apply it to the original equation.
For $t\geq 0$, we consider the transformation
$$
u(t)=\Gamma(t)y(t), 
$$ where
$\Gamma(t):(L^2(\mathbb{R}^3))^3\to(L^2(\mathbb{R}^3))^3$ is the linear
continuous operator defined by the equations
\begin{equation*}
d\Gamma(t) = \sum^n_{i=1} (B_i+\lambda_i I)\Gamma(t)d\b_i(t),\ t\ge0,\
\end{equation*}
 and $\Gamma(0)=I$.
 In other word, 
   $\Gamma(t)$ is defined in
the sense that, for every $z_0\in (L^2(\mathbb{R}^3))^3$, the continuous
$(\mathcal{F}_t)$--adapted $(L^2(\mathbb{R}^3))^3$--valued process
$z(t):=\G(t)z_0$, $t\ge0$, solves the following SDE on
$(L^2(\mathbb{R}^3))^3$,
$$dz(t)=\sum^n_{i=1}\tilde B_iz(t)d\beta_i(t),\ \ z(0)=z_0.$$
Similar as in \cite{BARBU20175395},
we also set
\begin{equation*}
\tilde B_i= B_i+\lambda_i I,\ i=1,...,n,
\end{equation*} where $I$ is the
identity operator.
Then  we can also write $\Gamma$ as the exponential form:
$$\Gamma(t)=\prod^N_{i=1}\exp\left(\b_i(t)\tilde B_i-\frac{t}{2}\ \tilde
B^2_i\right),\ t\geq 0.$$
Moreover,    we immediately have all of $\Gamma(t)$,    $\Gamma^{-1}(t)$ and $\tilde B_i$ commutate with (weak) derivatives. Note that 
$$\Gamma^{-1}(t)=\prod^N_{i=1}\exp\left(-\b_i(t)\tilde B_i+\frac{t}{2}\ \tilde
B^2_i\right),\ t\geq 0.$$
Moreover, 
$\tilde B_i \tilde B_j=\tilde B_j \tilde B_i$.

\begin{remark}
\
{\sl
It is obvious that the operator $B_i$, $\Gamma(t)$ and $\Gamma^{-1}(t)$ can be defined (as a continuous operator) in any
$(L^p(\mathbb{R}^3))^3$ for any $p\geq 1$ since the convolution with an $L^1$ function makes sense in any  $L^p$ space with the $L^1$ norm as the uniform  bound of the operator (Young's inequality).
Moreover, we have the following lemma 
as is proved in \cite{BARBU20175395}.
\begin {lemma}\label{S2lem1}
{\sl We have
\begin{equation*}
\|\Gamma(t)z\|_{L^q}+\|\Gamma^{-1}(t)z\|_{L^q} \leq  C_t\|z\|_{L^q},\ t\in [0,\infty),\ \forall
z\in  L^q(\mathbb{R}^3),\ \forall q\in[1,\infty),
\end{equation*} and
\begin{equation*}
\|\nabla(\G(t)z) \|_{L^q} \leq \|\G(t)\|_{L(L^q,L^q)}\|\nabla z\|_{L^q},\mbox{ for all  $z$  which satisfies  $z, \nabla z \in L^q(\mathbb{R}^3)$}. 
\end{equation*}}
\end{lemma}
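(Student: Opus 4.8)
The plan is to reduce the entire statement to two structural facts about the building blocks $\tilde B_i = B_i + \lam_i I$: that each is bounded on $L^q(\R^3)$ for every $q \in [1,\infty)$, and that each commutes with weak differentiation. For the boundedness I would invoke Young's convolution inequality directly: since $h_i \in L^1(\R^3)$, for every $z \in L^q(\R^3)$ we have $\|B_i z\|_{L^q} = \|h_i * z\|_{L^q} \leq \|h_i\|_{L^1}\|z\|_{L^q}$, so $B_i \in L(L^q,L^q)$ with norm at most $\|h_i\|_{L^1}$, and therefore $\|\tilde B_i\|_{L(L^q,L^q)} \leq \|h_i\|_{L^1} + |\lam_i| =: M_i$, a bound that is uniform in $q$.

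Next I would exploit the exponential representation $\Gamma(t) = \prod_{i=1}^{n} \exp(\b_i(t)\tilde B_i - \tfrac{t}{2}\tilde B_i^2)$. Because $\tilde B_i$ is a bounded operator, each exponential is given by a power series that converges in operator norm, and the triangle inequality together with submultiplicativity of $\|\cdot\|_{L(L^q,L^q)}$ yields
\[
\left\|\exp\!\left(\b_i(t)\tilde B_i - \tfrac{t}{2}\tilde B_i^2\right)\right\|_{L(L^q,L^q)} \leq \exp\!\left(|\b_i(t)|\,M_i + \tfrac{t}{2}M_i^2\right).
\]
Multiplying over the finite index set $i = 1,\dots,n$ gives $\|\Gamma(t)\|_{L(L^q,L^q)} \leq C_t$ with $C_t = \prod_{i=1}^{n}\exp(|\b_i(t)|M_i + \tfrac{t}{2}M_i^2)$, finite for each fixed $t$ (and fixed Brownian path) and independent of $q$. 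The same computation applied to $\Gamma^{-1}(t) = \prod_{i=1}^{n}\exp(-\b_i(t)\tilde B_i + \tfrac{t}{2}\tilde B_i^2)$ produces the identical bound, since only signs change; adding the two estimates gives the first inequality.

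For the second inequality the essential point is that $\nabla$ commutes with $\Gamma(t)$. Since convolution with an $L^1$ kernel commutes with weak differentiation, $\nabla(B_i z) = B_i(\nabla z)$ whenever $z, \nabla z \in L^q$, hence the same holds for $\tilde B_i$, for every polynomial in $\tilde B_i$, and --- passing to the norm-convergent limit --- for $\exp(\b_i(t)\tilde B_i - \tfrac{t}{2}\tilde B_i^2)$ and for the product $\Gamma(t)$. Consequently $\nabla(\Gamma(t)z) = \Gamma(t)(\nabla z)$, and applying the operator-norm bound componentwise gives
\[
\|\nabla(\Gamma(t)z)\|_{L^q} = \|\Gamma(t)(\nabla z)\|_{L^q} \leq \|\Gamma(t)\|_{L(L^q,L^q)}\,\|\nabla z\|_{L^q},
\]
which is exactly the claim.

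The one delicate point I would treat with care, rather than formally, is the commutation of $\Gamma(t)$ with weak derivatives: one must verify that interchanging $\nabla$ with the infinite exponential series is legitimate in $L^q$ (which follows from the norm convergence of the differentiated series, itself controlled by the same $M_i$), and that the elementary identity $\nabla(h_i * z) = h_i * \nabla z$ persists for merely weak derivatives and at the endpoint $q = 1$. Everything else is a direct application of Young's inequality and submultiplicativity of the operator norm, so I expect this commutation justification to be the only real obstacle.
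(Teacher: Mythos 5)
Your proof is correct and follows exactly the route the paper intends: the paper itself only cites Lemma 2.1 of Barbu--R\"ockner for this statement, but the remark immediately following it identifies the argument as Young's inequality $\|h_i*u\|_{L^q}\lesssim\|h_i\|_{L^1}\|u\|_{L^q}$ combined with the fact that $\Gamma(t)$ commutes with derivatives, which is precisely what you carry out via the exponential representation and submultiplicativity of the operator norm. Your closing caution about interchanging $\nabla$ with the norm-convergent exponential series is the right point to flag, and your treatment of it is adequate.
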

\begin{proof}
See Lemma 2.1 of \cite{BARBU20175395}.
\end{proof}\\
The above lemma also holds when $q=\infty$.  Since the proof is a direct result of 
Young's inequality
$$\|h_i*u\|_{L^q}\lesssim  \|h\|_{L^1} \|u\|_{L^q},
$$
which also holds as $q=\infty$, and the second inequality is due to the fact that $\Gamma$ commutates with derivatives.\\
 From Lemma \ref{S2lem1} we can view $\Gamma$ as a linear continuous operator from $(L^p(\mathbb{R}^3))^3$ to $(L^p(\mathbb{R}^3))^3$
for $1\leq p\leq \infty$.  Moreover, there is a common upper bound of 
$\|\Gamma(t)\|_{L(L^p,L^p)}$ which does not depend on $p$, but only
depends on $h_i$,$\lambda_i$, $t$ and, of course, the path $\omega$. }
\end{remark}

The next lemma tells us for any $s\in \R$, and  $h\in L^1$,    the convolution with $h$ is a continuous operator map from any Sobolev space $H^s$ to itself.
Therefore,  we can also extend the definition of  operator 
$B_i$, $\Gamma(t)$ and $\Gamma^{-1}(t)$ to
 continuous operators from any Sobolev space $H^s$ to itself.

\begin{lemma}\label{gammasobolev}
{\sl  For any $h\in L^1$ and $s\in\R$, the convolution with $h$ is a continuous operator mapping from any Sobolev space $H^s$ to itself.
}
\begin{proof}
It suffices to prove that for any $s>0$,   and $a\in H^s$,
we have $a\ast h \in H^s$.

\begin{equation*}
\begin{split}
\|a\ast h\|^2_{ H^s}&=  \int_{\R^3} |\mathcal{F}(a\ast h)|^2(\xi)(1+|\xi|^2)^sd\xi\\
&=\int_{\R^3} |\hat{a}(\xi)|^2 |\hat{h}(\xi)|^2(1+|\xi|^2)^sd\xi\\
&\leq \|a\|^2_{ H^s}\|  \hat{h}\|^2_{ L^{\infty}}\\
&\leq \|a\|^2_{ H^s}\|  h\|^2_{ L^{1}}.
\end{split}
\end{equation*}
Therefore,  we have proved the result.

\end{proof}

\end{lemma}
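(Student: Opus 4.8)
The plan is to pass to the Fourier side, where convolution becomes pointwise multiplication and the Sobolev norm is a weighted $L^2$ norm; continuity then reduces to a single uniform bound on the Fourier multiplier $\hat h$. This is the natural route because the defining norm \eqref{defhs} is already phrased in Fourier variables.

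First I would isolate the two elementary ingredients the argument rests on. The first is the convolution theorem $\widehat{a\ast h}=\hat a\,\hat h$, valid with the normalization $\hat u(\xi)=\int u(x)e^{-ix\cdot\xi}\,dx$ fixed in \eqref{defhs}. The second is the estimate $\|\hat h\|_{L^\infty}\le\|h\|_{L^1}$, which is immediate from
\begin{equation*}
|\hat h(\xi)|=\Bigl|\int_{\R^3}h(x)e^{-ix\cdot\xi}\,dx\Bigr|\le\int_{\R^3}|h(x)|\,dx=\|h\|_{L^1},
\end{equation*}
holding for every $\xi$ with a bound independent of $\xi$ — this last point is the crux.

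Then, for $a\in H^s$, I would substitute the convolution theorem into the defining norm and factor out the pointwise multiplier bound:
\begin{equation*}
\|a\ast h\|_{H^s}^2=\int_{\R^3}(1+|\xi|^2)^s|\hat a(\xi)|^2|\hat h(\xi)|^2\,d\xi\le\|h\|_{L^1}^2\int_{\R^3}(1+|\xi|^2)^s|\hat a(\xi)|^2\,d\xi=\|h\|_{L^1}^2\,\|a\|_{H^s}^2.
\end{equation*}
This yields $\|a\ast h\|_{H^s}\le\|h\|_{L^1}\|a\|_{H^s}$, which is exactly the asserted continuity, with operator norm controlled by $\|h\|_{L^1}$ — a bound that, as the surrounding remarks emphasize, is uniform in $s$.

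I do not anticipate any genuine obstacle. The weight $(1+|\xi|^2)^s$ enters only as a nonnegative factor that stays inside the integral and never interacts with the multiplier estimate, so the computation is insensitive to the sign of $s$ and holds verbatim for all $s\in\R$; there is no need to reduce to the case $s>0$. The only point deserving a word of care is that when $s<0$ the element $a$ is a tempered distribution rather than an $L^2$ function, so I would read the entire computation on the Fourier side from the outset, defining $a\ast h$ through the product $\hat a\,\hat h$ and letting the finiteness of the weighted integral certify that $a\ast h\in H^s$.
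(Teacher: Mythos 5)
Your proof is correct and follows essentially the same route as the paper's: pass to the Fourier side, use $\widehat{a\ast h}=\hat a\,\hat h$, and bound the multiplier by $\|\hat h\|_{L^\infty}\le\|h\|_{L^1}$. Your observation that the computation works verbatim for all $s\in\R$ without first reducing to $s>0$ is a minor tidying of the paper's presentation, not a different argument.
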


Thus we transform \eqref{C3S1eq1} to the following random PDEs of $y$:
\begin{equation}\label{C3S2eq1}
\begin{split}
 d y+
\Gamma^{-1}(\Gamma  y \cdot\nabla \Gamma y) dt-\Delta y dt&= - \Gamma^{-1} \nabla p dt,\\
y(0)&=u_{0}.
\end{split}
\end{equation} 
Note that 
$$
\nabla p=\nabla (-\Delta)^{-1}\dive\dive (\Gamma y\otimes \Gamma y)=
\sum\limits_{1\leq i,j\leq 3}\nabla (-\Delta)^{-1}\bigl(\partial_{i}
\partial_{j}(\Gamma y^i \Gamma y^j)\bigr).  $$
Let $Q$ be the following bilinear operator from $(L^2(\mathbb{R}^3))^3\times (L^2(\mathbb{R}^3))^3 $
to $(\mathcal{S}'(\mathbb{R}^3))^3$:
$$Q(x,y):=\dive (x\otimes y)+\sum\limits_{1\leq i,j\leq 3}\nabla (-\Delta)^{-1}(t)\bigl(\partial_{i}\partial_{j}(x^i y^j)\bigr)=Q(y,x).$$
Thus we can rewrite the equation in the following form of mild solution
\beq\label{mildsol}
y(t)=e^{t\Delta}u_0-\int_{0}^t e^{(t-s)\Delta}\Gamma^{-1}(\omega)Q\bigl(\Gamma(\omega) y(s), \Gamma(\omega) y(s)\bigr) ds.
\eeq

For simplicity from now on in this section we skip the dimension  notation $\mathbb{R}^3$ if it is 3 dimension and there is no confusion.
\begin{remark}
\
{\sl
\begin{enumerate}
\item 
$\Delta$ is an operator mapping from  tempered distribution space $\mathcal{S}'$ to $\mathcal{S}'$ .
\item We will show that $Q$, $\Gamma Q$  and $\Gamma^{-1} Q$  are well defined and continuous from $(L^p)^3\times (L^p)^3 $
to (at least) the Sobolev space  $(H^{-2})^3\subset (\mathcal{S}')^3$ for any $ 2\leq p< \infty$.
\\
\textbf{Case of $2<p<\infty$.}\\
It immediately follows from the $L^{\frac{p}{2}}$ boundedness of the Riesz transform.

\textbf{Case of $p=2$.}\\
Recall  that 
the inverse of the (minus)  Laplacian  $(-\Delta)^{-1}$  can be defined by Fourier multipliers:
for any $u$ which is in the range of the $\Delta$ ,  
$$(-\Delta)^{-1}u=\mathcal{F}^{-1}(|\xi|^{-2}\hat{u}(\xi)).
$$ 
Claim that 
for any $f\in L^1$,  $\partial_{i}\partial_{j}f 
$ is in the domain of $(-\Delta)^{-1}$ (i.e.  the range of the $\Delta$).\\
Indeed since  $f\in L^1$,  we have $\hat{f}\in L^{\infty}$, and $\xi_i\xi_j|\xi|^{-2}\hat{f}(\xi) \in L^{\infty}\subset\mathcal{S}'$.
Therefore, the term of Fourier inverse transform $\mathcal{F}^{-1}(\xi_i\xi_j|\xi|^{-2}\hat{f}(\xi))$ is at least meaningful in  $\mathcal{S}'$.

We have 
$$\Delta\mathcal{F}^{-1}(\xi_i\xi_j|\xi|^{-2}\hat{f}(\xi))=\partial_{i}\partial_{j}f
$$
and 
$$(-\Delta)^{-1}\bigl(\partial_{i}\partial_{j}f)=-\mathcal{F}^{-1}(\xi_i\xi_j|\xi|^{-2}\hat{f}(\xi)).
$$
Moreover,  since $\xi_i\xi_j|\xi|^{-2}\hat{f}(\xi) \in L^{\infty}$,
$$\|\mathcal{F}^{-1}(\xi_i\xi_j|\xi|^{-2}\hat{f}(\xi))\|_{H^{-2}}^2
=\int_{\R^3} |\xi_i\xi_j|\xi|^{-2}\hat{f}(\xi)|^2(1+|\xi|^2)^{-2}d\xi\leq \|f\|_{L^1}<\infty.
$$
Therefore,  $Q$ maps $(L^p)^3\times (L^p)^3 $
continuously to $(H^{-2})^3$.
Therefore, by Lemma \ref{gammasobolev} we know that $\Gamma Q$ and $\Gamma^{-1} Q$    also map $(L^p)^3\times (L^p)^3 $
continuously to $(H^{-2})^3$.

\item Therefore, by Sobolev embedding $Q$ and  $\Gamma Q$ can be well defined as  continous maps from 
 Sobolev space $(\dH^s)^3\times (\dH^s)^3$ for $0<s<\frac32$ to $(H^{-2})^3$ .
 
 \item From our definition of $Q(x,y)$, we immediately have  $\dive Q(x,y)=0$

\item
\label{remark5}
 Let $X$ be $L^p$, $p\geq 2$ or any Sobolev space $\dH^s$ for $0<s<\frac32$.  We have shown that
$Q$ is a continuous map from   $X^3\times X^3 $ to $(H^{-2})^3$,
thus if  $u$ and $v$ are  $\mathscr{L}([0,t])/\mathscr{B} (X^3)$-measurable,
$Q(u,v)$ is $\mathscr{L}([0,t])/\mathscr{B} ((H^{-2})^3)$-measurable,
where $\mathscr{L}([0,t])$ is the $\sigma$-algebra of the Lebesgue measurable sets in the interval $[0,t]$.

Since $X$   is dense in $(H^{-2})^3$,  and both $X$ and $(H^{-2})^3$ are Banach spaces,
we have $X\in \mathscr{B} ((H^{-2})^3)$ and 
$\mathscr{B} (X)=\mathscr{B} ((H^{-2})^3)\cap X$.   
\\
Therefore,  
$Q(u,v)1_{Q(u,v)\in X}$  is also $\mathscr{L}([0,t])/\mathscr{B} (X)$-measurable.

\item
Due to \ref{remark5},   and the fact that $\Gamma$ maps from any  Sobolev or Lebesgue space to itself
continuously,  (which we would prove later in the Remark \ref{C3S3lem4},    )
the integral in \eqref{mildsol} is meaningful in any Sobolev space if we can show the integration of their corresponding Sobolev norms are finite on the interval $[0,T]$ .

\end{enumerate}}

\end{remark}

\begin{lemma}
{\sl Assume that $\lambda_i, h_i$ satisfy 
 \begin{equation}\label{C3S2eq2}
|\lambda_i|>(\sqrt{12}+3)\|h_i\|_{L^1},\ \ \forall i=1,2,...,N.
\end{equation}
Let 
\begin{equation}\label{etatdef}
 \eta(t)=  \|\G(t) \|_{L(L^2,L^2)}^2
 \|\G^{-1}(t)\|_{L(L^2,L^2)} ,\ t\ge0,
\end{equation}
 where  for $q\in (1,\infty)$,
$\|\cdot\|_{L(L^q,L^q)}$ is the norm of the space $L(L^q,L^q)$ of
linear continuous operators on $L^q$.
Then we have
$$\sup_{t\geq0}\eta(t)<\infty, \mathbb{P}\mbox{-a.e.}$$}
\end{lemma}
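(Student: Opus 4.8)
The plan is to diagonalise $\Gamma(t)$ and $\Gamma^{-1}(t)$ on the Fourier side and thereby reduce the operator norms to suprema of scalar symbols. Since each $\tilde B_i=\lambda_i I+h_i*\,\cdot\,$ is a Fourier multiplier with bounded symbol $m_i(\xi):=\lambda_i+\hat h_i(\xi)$ (bounded because $|\hat h_i(\xi)|\le\|h_i\|_{L^1}$), and the $\tilde B_i$ commute, the exponential representation of $\Gamma$ shows that $\Gamma(t)$ and $\Gamma^{-1}(t)$ act on each component of $(L^2(\R^3))^3$ as Fourier multipliers with symbols $\sigma(t,\xi)=\exp\bigl(\sum_{i=1}^N(\beta_i(t)m_i(\xi)-\tfrac t2 m_i(\xi)^2)\bigr)$ and $\sigma(t,\xi)^{-1}$. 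By Plancherel's theorem the $L^2$-operator norm of a Fourier multiplier equals the $L^\infty$-norm of its symbol, so with $G(t,\xi):=\log|\sigma(t,\xi)|=\sum_i\bigl(\beta_i(t)\operatorname{Re}m_i(\xi)-\tfrac t2\operatorname{Re}(m_i(\xi)^2)\bigr)$ one obtains the clean identity $\log\eta(t)=2\sup_\xi G(t,\xi)-\inf_\xi G(t,\xi)$.

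First I would bound $G$ from above and below, uniformly in $\xi$, using only $|\hat h_i(\xi)|\le\|h_i\|_{L^1}=:H_i$. Writing $\ell_i:=|\lambda_i|$ and $M_i:=\ell_i+H_i$, the linear symbol satisfies $|\operatorname{Re}m_i|\le M_i$, while $\operatorname{Re}(m_i^2)=\lambda_i^2+2\lambda_i\operatorname{Re}\hat h_i+\operatorname{Re}(\hat h_i^2)$ lies in $[c_i,M_i^2]$ with $c_i:=\ell_i^2-2\ell_iH_i-H_i^2$ (here $|\operatorname{Re}(\hat h_i^2)|\le|\hat h_i|^2\le H_i^2$). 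Feeding these into $G$ gives $\sup_\xi G\le\sum_i(M_i|\beta_i(t)|-\tfrac t2 c_i)$ and $\inf_\xi G\ge-\sum_i(M_i|\beta_i(t)|+\tfrac t2 M_i^2)$, hence
\[
\log\eta(t)\le 3\sum_{i=1}^N M_i|\beta_i(t)|-t\sum_{i=1}^N\Bigl(c_i-\tfrac12 M_i^2\Bigr).
\]

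The decisive algebraic check is that the coefficient of $t$ is strictly negative. A direct computation gives $c_i-\tfrac12 M_i^2=\tfrac12(\ell_i^2-6\ell_iH_i-3H_i^2)$, which is positive exactly when $\ell_i>(3+\sqrt{12})H_i$; this is precisely hypothesis \eqref{C3S2eq2}. Thus $\kappa:=\sum_i(c_i-\tfrac12 M_i^2)>0$ and $\log\eta(t)\le 3\sum_i M_i|\beta_i(t)|-\kappa t$. To finish, I would invoke the almost sure sublinear growth of Brownian motion, $\beta_i(t)/t\to0$ as $t\to\infty$ (a consequence of the law of the iterated logarithm), so that on a set of full probability $3\sum_i M_i|\beta_i(t)|-\kappa t\to-\infty$; in particular $\log\eta$ is bounded above on $[T_1,\infty)$ for some finite random time $T_1$. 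On the complementary compact interval $[0,T_1]$ the bounds for $\sup_\xi G$ and $\inf_\xi G$ obtained above are themselves finite, since the Brownian paths are bounded on compact time intervals. Combining the two regimes yields $\sup_{t\ge0}\eta(t)<\infty$ almost surely.

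The main obstacle is the bookkeeping with complex symbols: because $\hat h_i$ is complex-valued, it is $\operatorname{Re}m_i$ and $\operatorname{Re}(m_i^2)$—not $m_i$ itself—that control the moduli, and one must ensure that the lower bound $c_i$ for $\operatorname{Re}(m_i^2)$ is large enough to beat twice the quadratic loss coming from the exponent of $\|\Gamma(t)\|^2$. It is exactly this competition that forces the threshold $(3+\sqrt{12})\|h_i\|_{L^1}$ and hence the precise form of the smallness condition \eqref{C3S2eq2}; everything else (Plancherel, the symbolic calculus for commuting multipliers, and the growth rate of $\beta_i$) is standard.
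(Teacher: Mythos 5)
Your proof is correct, and it reaches the paper's bound by a genuinely different mechanism. The paper's proof simply cites Remark 1.2 of Barbu--R\"ockner and records the inequality $\eta(t)\le\prod_{i=1}^N\exp\bigl(3|\beta_i(t)|(\|h_i\|_{L^1}+|\lambda_i|)-t\alpha_i\bigr)$ with $\alpha_i=\frac12\lambda_i^2-\frac32\bigl(\|h_i\|_{L^1}^2+2|\lambda_i|\,\|h_i\|_{L^1}\bigr)$; that argument is an operator-norm computation valid on every $L^q$: one factors out the scalar part $\exp\bigl(\lambda_i\beta_i(t)-\frac t2\lambda_i^2\bigr)$, whose norm is exact, and dominates the remaining exponential by the exponential of its operator norm, using only $\|B_i\|_{L(L^q,L^q)}\le\|h_i\|_{L^1}$. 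You instead exploit that $\eta$ is built from $L^2$ operator norms of Fourier multipliers, so Plancherel gives the \emph{exact} norms as suprema of $|\sigma(t,\xi)|^{\pm1}$, and the whole estimate reduces to pointwise bounds on $\operatorname{Re}m_i$ and $\operatorname{Re}(m_i^2)$. Reassuringly, your $c_i-\frac12M_i^2=\frac12\bigl(|\lambda_i|^2-6|\lambda_i|\,\|h_i\|_{L^1}-3\|h_i\|_{L^1}^2\bigr)$ coincides with the paper's $\alpha_i$, and your Brownian coefficient $3M_i$ with the paper's $3(\|h_i\|_{L^1}+|\lambda_i|)$, so the two routes yield literally the same exponential bound and the same threshold $(3+\sqrt{12})\|h_i\|_{L^1}$ in \eqref{C3S2eq2}. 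The trade-off is that your argument is tied to $q=2$ (which suffices here, since $\eta$ is defined via $L(L^2,L^2)$ norms), whereas the operator-norm argument transfers verbatim to $L(L^q,L^q)$ for all $q$, which is the form actually used in Barbu--R\"ockner. The concluding step --- almost sure sublinear growth of $\beta_i$ beating the negative linear drift, together with boundedness of the paths on compact time intervals --- is the same in both arguments.
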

\begin{proof}
The proof is the same to Remark 1.2 of \cite{BARBU20175395} if we 
note that the following still holds by calculation directly,
$$\eta(t)\le\prod^N_{i=1}\exp\bigl(3|\b_i(t)|
(\| h_i \|_{L^1}+|\lambda_i|)-t\alpha_i\bigr),\
t\in[0,\infty),$$ where $\alpha_i:=\frac{1}{2}
\lambda^2_i-\frac{3}{2}(\|h_i\|^2_{L^1}+2|\lambda_i|\,\|h_i\|_{L^1})$ 
is strictly positive followed by \eqref{C3S2eq2}.
\end{proof}

\section{The Main Theorem}

Define the function space $\mathcal{Z}_{T}^{\gamma}$ to be functions from $[0,T]\times \mathbb{R}^3$ to $\mathbb{R}^d$ ($d\geq 1$) with the corresponding norm
$$\|u\|_{\mathcal{Z}_{T}^{\gamma}}^2:=\|u\|_{L^\infty([0,T];\dH^{\frac{1}{2}+\gamma})}^2+\int_{0}^T\|\nabla u(t)\|_{\dH^{\frac{1}{2}+\gamma}}^2dt
$$
finite.\\
The completeness of $\mathcal{Z}_{T}^{\gamma}$  when
$0<\gamma<1$
is proved in the Appendix \ref{A}.

\begin{definition}[$\dH^{\frac{1}{2}+\gamma}$-mild solution on path $\omega$]\label{mildsolution}
{\sl   Fix $0<\gamma <1$ and the path $\omega$.  
Let $y(t,x)$ be a function from 
$[0,T]\times \R^3$
to $\R$.   
  We say that $y$ is an \textbf{$\dH^{\frac{1}{2}+\gamma}$-mild solution
   on path $\omega$}  to 
  \eqref{C3S2eq1}  in the interval $[0, T]$
if 
\begin{enumerate}
\item $y\in \mathcal{Z}_{T}^{\gamma}$;
\item \eqref{mildsol} holds for any $0\leq t\leq T$.
\end{enumerate}

}

\end{definition}

\begin{theorem}\label{C3S3thm1}
{\sl Fix $0<\gamma <1$.  
Assume that \eqref{C3S2eq2} holds.   
Given any (fixed)  vector field $u_0$ such that   $u_0\in \tilde{H}^N$ for any positive integer $N$,   
then for   $\P \mbox{-a.e.}$ path  $\omega$,       
 a positive time $T(u_{0},\omega)  $  exists such that a unique $\dH^{\frac{1}{2}+\gamma}$-mild  solution  on path $\omega$      to 
  \eqref{C3S2eq1} exists in the interval $[0,  T(u_{0},\omega)]  $.  
  Let $T^*(u_{0},\omega )$ be the supremum  of such $T(u_{0},\omega)  $.
  In this case we call $T^*(u_{0},\cdot )$ the \textbf{life span in $\dH^{\frac{1}{2}+\gamma}$ associated with the regular initial value $u_{0}$}. (Note that although $u_{0}$ is not random,  the life span depends on the path).
  Then we have $\P \mbox{-a.e.}$ path,  
\beq\label{C3S3eq13}
T^*(u_{0},\omega )\geq 
T_{*}(u_{0},\omega):=   c_{\gamma}\bigl(\sup\limits_{t\geq 0}\eta(t)\bigr)^{-1}\|u_0\|_{\dH^{\frac{1}{2}+\gamma}}^{-\frac{2}{\gamma}}, 
\eeq
 
and the strict positive number $c_{\gamma}$ depends only on $\gamma$.
}

\end{theorem}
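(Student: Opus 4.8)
The plan is to solve the mild formulation \eqref{mildsol} by a Fujita--Kato fixed point argument in the space $\mathcal{Z}_T^{\gamma}$, carried out pathwise on the full-measure event $\{\sup_{t\ge0}\eta(t)<\infty\}$ on which \eqref{etatdef} is finite. Set $\eta_\ast:=\sup_{t\ge0}\eta(t)$ and $s:=\tfrac12+\gamma\in(\tfrac12,\tfrac32)$. Since $u_0\in\tilde H^N$ for every $N$, its Fourier transform is regular enough near the origin and decays fast enough at infinity that $\|u_0\|_{\dH^{s}}<\infty$, so the claimed bound is meaningful. I would study the map
\[
\Psi(y)(t):=e^{t\Delta}u_0-\int_0^t e^{(t-\tau)\Delta}\Gamma^{-1}(\tau)\,Q\bigl(\Gamma(\tau)y(\tau),\Gamma(\tau)y(\tau)\bigr)\,d\tau ,
\]
and seek a fixed point of $\Psi$ in a closed ball of $\mathcal{Z}_T^{\gamma}$; by Definition \ref{mildsolution} an $\dH^{s}$-mild solution on $[0,T]$ is exactly such a fixed point.

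First I would record the linear estimate $\|e^{\cdot\Delta}u_0\|_{\mathcal{Z}_T^{\gamma}}\le C\|u_0\|_{\dH^{s}}$, which follows at once from Plancherel's theorem: the $L^\infty_t\dH^{s}$ part is bounded by $\|u_0\|_{\dH^{s}}$ since $e^{-t|\xi|^2}\le1$, while for the dissipation part one uses $\int_0^T|\xi|^{2}e^{-2t|\xi|^2}\,dt\le\tfrac12$ before integrating in $\xi$. The structural point is that the dissipation part $\bigl(\int_0^T\|\nabla e^{t\Delta}u_0\|_{\dH^{s}}^2\,dt\bigr)^{1/2}$ tends to $0$ as $T\to0$ by dominated convergence; this is what makes the existence time depend on $\|u_0\|_{\dH^s}$ rather than forcing a smallness assumption.

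The heart of the proof is the bilinear estimate
\[
\Bigl\|\int_0^{\cdot}e^{(\cdot-\tau)\Delta}\Gamma^{-1}(\tau)Q\bigl(\Gamma(\tau)a,\Gamma(\tau)b\bigr)\,d\tau\Bigr\|_{\mathcal{Z}_T^{\gamma}}\le C_\gamma\,\eta_\ast\,T^{\gamma/2}\,\|a\|_{\mathcal{Z}_T^{\gamma}}\|b\|_{\mathcal{Z}_T^{\gamma}}.
\]
To prove it I would first discard the transformation operators: by Lemma \ref{gammasobolev}, and the fact that $\Gamma^{\pm1}(\tau)$ act as Fourier multipliers whose $L^\infty$ symbol bound is the same on every $\dH^\sigma$ as on $L^2$, the composition $\Gamma^{-1}(\tau)Q(\Gamma(\tau)\cdot,\Gamma(\tau)\cdot)$ contributes only the single factor $\|\Gamma^{-1}(\tau)\|\,\|\Gamma(\tau)\|^2\le\eta(\tau)\le\eta_\ast$, so it remains to bound the constant-coefficient operator $a,b\mapsto\int_0^{\cdot}e^{(\cdot-\tau)\Delta}Q(a,b)\,d\tau$. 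Since $Q(a,b)$ is a first-order derivative of $a\otimes b$ (plus Riesz/pressure terms of the same order), I would split the $\mathcal{Z}_T^{\gamma}$ norm of the output into its $L^\infty_t\dH^{s}$ and $L^2_t\dH^{s+1}$ parts and bound each by a homogeneous-Sobolev product law combined with the smoothing of $e^{t\Delta}$. Using the interpolation embedding $\mathcal{Z}_T^{\gamma}\hookrightarrow L^p_t\dH^{\,s+2/p}$ for $2\le p\le\infty$, the product $a\otimes b$ can be placed exactly in the $\dH^{s}$ target after one derivative is absorbed by the kernel, and the criticality gap $\gamma>0$ leaves a surplus of $\gamma$ derivatives that converts, via Hölder in time on $[0,T]$, into the gain $T^{\gamma/2}$. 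I expect this product-law/heat-smoothing bookkeeping --- getting the homogeneous indices to land precisely at $\dH^{s}$ and extracting the exact power $T^{\gamma/2}$ --- to be the main technical obstacle, and the role of the Littlewood--Paley decomposition advertised in the introduction is exactly to make these paraproduct estimates rigorous.

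With the two estimates in hand the fixed point is routine. Choosing the radius $R$ comparable to $\|u_0\|_{\dH^{s}}$, the map $\Psi$ sends the ball $\{\|y\|_{\mathcal{Z}_T^{\gamma}}\le R\}$ into itself and is a contraction there as soon as the smallness condition $C_\gamma\,\eta_\ast\,T^{\gamma/2}\,\|u_0\|_{\dH^{s}}\le\tfrac12$ holds, the polarised bilinear estimate providing the Lipschitz bound for $\Psi(y_1)-\Psi(y_2)$. Solving this constraint for the largest admissible $T$ produces a lower bound on the existence time of the form \eqref{C3S3eq13}, with $c_\gamma$ depending only on $\gamma$; uniqueness on $[0,T]$ follows from the same bilinear estimate applied to the difference of two solutions together with a continuity argument, and taking the supremum over admissible $T$ defines $T^\ast(u_0,\omega)$. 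Finally, since every Picard iterate is a measurable function of $\omega$ (the operators $\Gamma^{\pm1}$, $Q$ and $e^{t\Delta}$ preserve the relevant measurability, as recorded in the remarks following \eqref{mildsol}) and the whole construction lives on the full-measure set $\{\eta_\ast<\infty\}$, the solution and its life span are defined for $\mathbb{P}$-a.e.\ $\omega$, yielding the almost-sure statement \eqref{C3S3eq13}.
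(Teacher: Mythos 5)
Your proposal follows essentially the same route as the paper's own proof: a pathwise fixed point in $\mathcal{Z}_T^{\gamma}$ on the full-measure event $\{\sup_{t\ge0}\eta(t)<\infty\}$, the linear bound $\|e^{t\Delta}u_0\|_{\mathcal{Z}_T^{\gamma}}\le\|u_0\|_{\dH^{1/2+\gamma}}$ (Lemma \ref{heatequation}), and a bilinear estimate of size $C\,\eta_*\,T^{\gamma/2}$ obtained from the product law $\|u\otimes u\|_{\dB_{2,1}^{2\gamma-1/2}}\lesssim\|u\|_{\dH^{1/2+\gamma}}^{2}$ together with Littlewood--Paley heat-kernel smoothing and the Besov operator bounds for $\Gamma^{\pm1}$, which is exactly the content of Lemmas \ref{C3S3lem3}, \ref{C3S3lem2} and \ref{55BCD}. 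The only caveat is that solving $C_{\gamma}\eta_*T^{\gamma/2}\|u_0\|_{\dH^{1/2+\gamma}}\le\tfrac12$ yields the exponent $\eta_*^{-2/\gamma}$ rather than the $\eta_*^{-1}$ appearing in \eqref{C3S3eq13} (a slightly weaker bound of the same form, since $\eta_*\ge\eta(0)=1$); this discrepancy is inherited from the paper's own one-line deduction and is not a defect specific to your argument.
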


We put the proof in  the Section \ref{C332}.
\begin{remark}
\
{\sl
\begin{enumerate}
\item In particular, in \eqref{C3S3eq13},  if  there is no noise, we do not need any transformation,
thus both $\Gamma$ and $\Gamma^{-1}$ are identity and $\eta_t=1$ for
any $t$, then we obtain the result which is consistent to the deterministic cases.
\item
Similar to \cite{BARBU20175395}, the solution we obtain is not adapted. 

\end{enumerate}}
\end{remark}
The proof of the main theorem relies heavily on the fixed point theorem. 
There are two key steps in the following proof: one is to show that $\Gamma$ and $\Gamma^{-1}$ could extend as operators in Besov spaces (see the next section for the definition of the Besov spaces), the proof of which relies on the commutative property of $\Gamma$ and Littlewood-Paley operators (we will introduce
the tool of Littlewood-Paley theory in the next section).      In other word, it relies on the commutative property of  convolutions since Littlewood-Paley operators are actually convolution operators.  This is the reason why we need the noise to be also convolution types. Another step is that just like what we do in deterministic equations, we write the solution in the form of mild solution and use the contraction property of the semigroup 
$e^{t\Delta}$ in order to obtain the estimates we need for the fixed point theorem.

\subsection{ Littlewood-Paley Theory}

Let us first recall the (homogeneous) Littlewood-Paley decomposition in the book 
\cite{Bahouri2011Fourier}. 
We will give a brief introduction of  Littlewood-Paley theory,  the details  of which could be found in he book 
\cite{Bahouri2011Fourier}. 
For $a\in \mathcal{S}'$,  as usual,  denote
$\cF a$ and $\widehat{a}$   the   Fourier transform of
the distribution $a$.
\begin{definition}[the Space $\mathcal{S}'_h$,  see Definition 1.26 of \cite{Bahouri2011Fourier} ]
{\sl  Let $\mathcal{S}'_h$ be the space of tempered distributions 
$u$ such that 
\beq\label{defsh}
\lim\limits_{\lambda\rightarrow \infty }\|\theta(\lambda D)u\|_{L^\infty}=0
\eeq
for any $\theta\in C_{c}^\infty$,
where $\theta(\lambda D)u$ is the Fourier multiplier defined as follows
$$\theta(\lambda D)u:=\mathcal{F}^{-1}(\theta(\lambda\cdot)\hat{u}).
$$}
\end{definition}
The next remark comes from Remark 1.27   of   \cite{Bahouri2011Fourier}
and the examples afterwards.
\begin{remark}
\
{\sl
\begin{enumerate}
\item Whether or not  a tempered distribution $u$ belongs to $\mathcal{S}'_h$ depends only on low frequencies.  $ u$  belongs to $\mathcal{S}'_h$  if and only if one can find some smooth compactly supported function $\theta$ such that  $\theta(0)\neq 0$ and \eqref{defsh}
holds.
\item Directly by the definition we immediately  know that the space $\mathcal{S}'_h$ contains all the tempered distributions 
whose Fourier transforms are locally integrable around $0$. In particular,
all the Sobolev spaces (homogeneous and non-homogeneous) are subsets
of $\mathcal{S}'_h$.
\end{enumerate}
}

\end{remark}

For $a\in \mathcal{S}'_h$, we set
\beq
\begin{split}
&\dot{\Delta}_k a=\cF^{-1}(\varphi(2^{-k}|\xi |)\widehat{a}),
\end{split} \label{C3S3eq1}\eeq
where $\varphi(\tau)$ is a 
smooth function value in $[0,1]$ such that 
 \beno
&&\Supp \varphi \subset \Bigl\{\tau \in \R;  \ \frac34 \leq
|\tau| \leq \frac83 \Bigr\}\andf \  \ \forall
 \tau>0\,,\ \sum_{k\in\Z}\varphi(2^{-k}\tau)=1.\\
 \eeno
 Then we have  
 $$\forall \xi\in \mathbb{R}^3 \backslash \{0\},   
 \frac{1}{2} \leq  \sum\limits_{j\in\mathbb{Z}}\varphi^2(2^{-j}\xi)\leq 1. $$
Moreover, we have the following equality for $a\in \mathcal{S}'_h$,
\beq\label{C3S3eq2}
a=\sum\limits_{j\in \mathbb{Z}}\dot{\Delta}_ja.
 \eeq
 For $s\in \mathbb{R}$ and $(p,r)\in [1,\infty]^2$, define the homogeneous Besov spaces $\dB_{p,r}^s$ which consists of those distributions in $\mathcal{S}'_h$ such that
 $$
 \|u\|_{\dB_{p,r}^s}:=\bigl(\sum_{j\in\Z}2^{rjs}\|\dot{\Delta}_j u\|_{L^p}^r\bigr)^{\frac{1}{r}}<\infty.
 $$
 From the definition we immediately know the norm of 
 $\dH^s$ coincides with $\dB_{2,2}^s$.
\begin{lemma}\label{C3S3lem3}
{\sl
For $0<\gamma<1$,
 $$\|u\otimes u\|_{\dB_{2,1}^{2\gamma-\frac{1}{2}}}\lesssim \|u\|_{\dH^{\frac{1}{2}+\gamma}}^2
$$}
\begin{proof}
See Corollary 2.55 of \cite{Bahouri2011Fourier}.

\end{proof}
\end{lemma}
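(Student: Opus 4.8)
The plan is to reduce the tensor estimate to the scalar bilinear estimate
$\|fg\|_{\dB^{2\gamma-\frac12}_{2,1}}\lesssim \|f\|_{\dH^{\frac12+\gamma}}\|g\|_{\dH^{\frac12+\gamma}}$,
applied componentwise to $(u\otimes u)^{ij}=u^iu^j$ and then summed over the $O(1)$ components using $\|u^i\|_{\dH^{s}}\le\|u\|_{\dH^{s}}$. Writing $s=\frac12+\gamma$ and $\sigma:=2s-\frac32=2\gamma-\frac12$, the target regularity $\sigma$ is exactly the one predicted by the product law $\dH^{s}\cdot\dH^{s}\hookrightarrow\dB^{2s-3/2}_{2,1}$ in dimension $3$, and the endpoint summability index $r=1$ is forced by $\frac1r=\frac12+\frac12$. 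Checking this scaling first pins down every exponent below. The scalar estimate I would prove through Bony's decomposition $fg=T_fg+T_gf+R(f,g)$, where $T_fg=\sum_jS_{j-1}f\,\dot\Delta_jg$ with $S_{j-1}=\sum_{j'\le j-2}\dot\Delta_{j'}$, and $R(f,g)=\sum_j\dot\Delta_jf\,\widetilde{\dot\Delta}_jg$ with $\widetilde{\dot\Delta}_j=\dot\Delta_{j-1}+\dot\Delta_j+\dot\Delta_{j+1}$.

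For the two paraproducts I would use that $S_{j-1}f\,\dot\Delta_jg$ is spectrally supported in an annulus of size $2^j$, so only $|j-k|\le N_0$ contribute to $\dot\Delta_k(T_fg)$. Bernstein's inequality gives $\|\dot\Delta_{j'}f\|_{L^\infty}\lesssim 2^{3j'/2}\|\dot\Delta_{j'}f\|_{L^2}$, hence $\|S_{j-1}f\|_{L^\infty}\lesssim 2^{j(3/2-s)}b_j$ with $(b_j)\in\ell^2$ controlled by $\|f\|_{\dH^s}$; the geometric summation converges precisely because $\frac32-s=1-\gamma>0$, which is where the hypothesis $\gamma<1$ enters. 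Writing $\|\dot\Delta_jg\|_{L^2}=e_j2^{-js}$ with $(e_j)\in\ell^2$ and using $\frac32-2s=-\sigma$, one obtains $2^{k\sigma}\|\dot\Delta_k(T_fg)\|_{L^2}\lesssim\sum_{|j-k|\le N_0}b_je_j$, and summing in $k$ in $\ell^1$ leaves $\sum_jb_je_j\le\|(b_j)\|_{\ell^2}\|(e_j)\|_{\ell^2}\lesssim\|f\|_{\dH^s}\|g\|_{\dH^s}$.

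For the remainder, $\dot\Delta_jf\,\widetilde{\dot\Delta}_jg$ is supported in a ball of size $2^j$, so $\dot\Delta_kR$ only sees $j\ge k-N_1$. Rather than lose a derivative through $L^2\to L^\infty$, I would estimate $\|\dot\Delta_k(\dot\Delta_jf\,\widetilde{\dot\Delta}_jg)\|_{L^2}\lesssim 2^{3k/2}\|\dot\Delta_jf\|_{L^2}\|\widetilde{\dot\Delta}_jg\|_{L^2}$ by H\"older followed by Bernstein on the output. Since $\sigma+\frac32=2s$, this produces $2^{k\sigma}\|\dot\Delta_kR\|_{L^2}\lesssim\sum_{j\ge k-N_1}2^{(k-j)2s}d_j\widetilde e_j$, a convolution against an $\ell^1$ sequence because $2s=1+2\gamma>0$; summing in $k$ and applying Cauchy--Schwarz once more returns $\|f\|_{\dH^s}\|g\|_{\dH^s}$. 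Thus the two conditions actually used are $s<\frac32$ for the paraproducts and $s_1+s_2>0$ for the remainder, both guaranteed by $0<\gamma<1$.

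The subtle point, more than an obstacle, is the endpoint index $r=1$: the naive product law only yields $\dB^\sigma_{2,2}=\dH^\sigma$. The improvement to $r=1$ comes from the fact that \emph{both} factors carry the same $\ell^2$ Littlewood--Paley profile, so the two $\ell^2$ sequences combine through Cauchy--Schwarz into an $\ell^1$ sum. The whole argument therefore has to be routed so that every term is bounded by a product of two $\ell^2$ profiles, and one must keep track of the convergence of the geometric and convolution sums; these are the only places where the range of $\gamma$ is consumed. (This is exactly Corollary 2.55 of \cite{Bahouri2011Fourier}; the above is the self-contained route.)
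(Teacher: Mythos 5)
Your proof is correct: the scaling bookkeeping, the use of $s<\tfrac32$ (i.e.\ $\gamma<1$) to sum the low frequencies in the paraproducts, the use of $2s>0$ to sum the remainder, and the Cauchy--Schwarz pairing of the two $\ell^2$ profiles that yields the $\dB_{2,1}$ endpoint are all exactly right. The paper's proof of this lemma is simply a citation of Corollary~2.55 of \cite{Bahouri2011Fourier}, and what you have written is the standard Bony-decomposition proof of that very corollary, so this is the same approach, just carried out in full rather than quoted.
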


 \begin{remark}\label{C3S3lem4}
 \
 
 {\sl
 \begin{enumerate}
 \item  By definition $\dot{\Delta}_j $ commutates with $\Gamma(t)$ and $\Gamma^{-1}(t)$. That is,
for  $u\in L^p$, we have
\beq\label{C3S3eq12}
\Gamma \dot{\Delta}_j u=\dot{\Delta}_j \Gamma u \text{  and }
\Gamma^{-1} \dot{\Delta}_j u=\dot{\Delta}_j \Gamma^{-1} u.
\eeq
\item  From \eqref{C3S3eq2}, we know for any $p$, $L^p$ is dense in $\mathcal{S}'_h$,
thus we can extend $\Gamma(t)$ and $\Gamma^{-1}(t)$ continuously and uniquely to an operator from  
$ \mathcal{S}'_h $ to $ \mathcal{S}'_h$.

\item We claim for $u\in   \dB_{p,r}^s$, 
\beq\label{C3S3eq11}
\Gamma u=\sum\limits_{j\in\mathbb{Z}} \dot{\Delta}_j \Gamma u=\sum\limits_{j\in\mathbb{Z}}\Gamma \dot{\Delta}_j u,
\eeq
and
\beq\label{C3S3eq10}
\|\Gamma(t)\|_{L(\dB_{p,r}^s, \dB_{p,r}^s)}\leq \|\Gamma(t)\|_{L(L^p,L^p)},
\eeq
where $L(X,X)$ denote the operator norm from $X$ to $X$.\\
Indeed, first we note that \eqref{C3S3eq12} still holds for $u\in \dB_{p,r}^s$,
since by the way of expansion, we have
$$\Gamma u:=    \sum\limits _{k\in\mathbb{Z}}\Gamma\dot{\Delta}_k u  ,
$$
and 
\begin{equation*}
\begin{split}
\Gamma \dot{\Delta}_j u&=\sum\limits _{k\in\mathbb{Z}}\Gamma \dot{\Delta}_k \dot{\Delta}_j u\\
&=\sum\limits _{k\in\mathbb{Z}}\Gamma \dot{\Delta}_j \dot{\Delta}_k u\\
&=\sum\limits _{k\in\mathbb{Z}} \dot{\Delta}_j\Gamma \dot{\Delta}_k u\\
&=\dot{\Delta}_j\Gamma u,
\end{split}
\end{equation*}
where the third inequality is due to \eqref{C3S3eq12} in $L^p$.
Thus we finish the proof of  \eqref{C3S3eq11}.
For the proof of \eqref{C3S3eq10}, immediately follows by the definition of Besov norms we have
\begin{equation*}
\begin{split}
\|\Gamma u\|_{\dB_{p,r}^s}&=\bigl(\sum_{j\in\Z}2^{rjs}\|\dot{\Delta}_j \Gamma u\|_{L^p}^r\bigr)^{\frac{1}{r}}\\
&=\bigl(\sum_{j\in\Z}2^{rjs}\| \Gamma \dot{\Delta}_j  u\|_{L^p}^r\bigr)^{\frac{1}{r}}\\
&\leq \bigl(\sum_{j\in\Z}2^{rjs}\| \Gamma\|_{L(L^p,L^p)}^r\|  \dot{\Delta}_j  u\|_{L^p}^r\bigr)^{\frac{1}{r}}\\
&\leq \| \Gamma\|_{L(L^p,L^p)} \| u\|_{\dB_{p,r}^s}.
\end{split}
\end{equation*}

  \end{enumerate}}
  
 \end{remark}

 Roughly speaking, the above preparation is to show how `good' the operators $\Gamma$  and $\Gamma^{-1}$ are. They could be expanded to any Besov (hence Sobolev) space and could  commutate with derivatives and Littlewood-Paley operators.
 After the preparation, we now show the following lemma, 
which is the crucial step of the proof of the main theorem.

\subsection{Proof of Main Theorem}\label{C332}

Define 
$$F(y)(t)=-\int_{0}^t e^{(t-s)\Delta}\Gamma^{-1}Q(\Gamma y, \Gamma y) ds.
$$

\begin{lemma}\label{C3S3lem2}
{\sl There exists some constant C, which depends on the path $\omega$, such that 
$$\|F(y)\|_{\mathcal{Z}_T^\gamma}\leq C T^{\frac{\gamma}{2}}\|y\|_{\mathcal{Z}_T^\gamma}^2.
$$}
\end{lemma}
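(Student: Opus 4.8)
The plan is to bound the two constituents of the $\mathcal{Z}_T^\gamma$-norm separately, since by definition
$\|F(y)\|_{\mathcal{Z}_T^\gamma}^2=\|F(y)\|_{L^\infty([0,T];\dH^{\frac12+\gamma})}^2+\|\nabla F(y)\|_{L^2([0,T];\dH^{\frac12+\gamma})}^2$, and because $\nabla$ shifts the homogeneous index by one, the second piece equals $\|F(y)\|_{L^2([0,T];\dH^{\frac32+\gamma})}^2$. I will show that each is $\lesssim C_\omega T^{\gamma/2}\|y\|_{\mathcal{Z}_T^\gamma}^2$. Abbreviate $g(s):=\Gamma^{-1}Q(\Gamma y(s),\Gamma y(s))$, so $F(y)(t)=-\int_0^t e^{(t-s)\Delta}g(s)\,ds$. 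Two structural facts are used throughout. First, by Lemma \ref{gammasobolev} and Remark \ref{C3S3lem4}, both $\Gamma$ and $\Gamma^{-1}$ are bounded on every $\dH^\sigma$ with an $\omega$-dependent norm, so $\|g(s)\|_{\dH^\sigma}\lesssim C_\omega\|Q(\Gamma y,\Gamma y)(s)\|_{\dH^\sigma}$. Second, the (matrix-valued) Fourier symbol of $Q$ is homogeneous of degree one and bounded on the unit sphere (this is exactly the Riesz-multiplier analysis of the pressure term carried out in the Remark following \eqref{mildsol}), whence $\|Q(a,b)\|_{\dH^\sigma}\lesssim\|a\otimes b\|_{\dH^{\sigma+1}}$ for every $\sigma\in\R$.

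For the sup-in-time piece I would use the Besov estimate of Lemma \ref{C3S3lem3} with both factors in $\dH^{\frac12+\gamma}$, giving $\|g(s)\|_{\dB_{2,1}^{2\gamma-\frac32}}\lesssim C_\omega\|\Gamma y\otimes\Gamma y\|_{\dB_{2,1}^{2\gamma-\frac12}}\lesssim C_\omega\|y(s)\|_{\dH^{\frac12+\gamma}}^2$. The heat flow satisfies the frequency-localized bound $\|\dot{\Delta}_j e^{\tau\Delta}f\|_{L^2}\lesssim e^{-c2^{2j}\tau}\|\dot{\Delta}_j f\|_{L^2}$; testing this against the $\ell^1$ weights of the $\dB_{2,1}$-norm and optimizing $\sup_j 2^{j(4-2\gamma)}e^{-c2^{2j}\tau}\lesssim\tau^{-(2-\gamma)}$ yields the smoothing inequality $\|e^{\tau\Delta}g\|_{\dH^{\frac12+\gamma}}\lesssim\tau^{\frac\gamma2-1}\|g\|_{\dB_{2,1}^{2\gamma-\frac32}}$. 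Since $\gamma>0$ the kernel $\tau^{\frac\gamma2-1}$ is integrable, so $\|F(y)(t)\|_{\dH^{\frac12+\gamma}}\lesssim C_\omega\|y\|_{\mathcal{Z}_T^\gamma}^2\int_0^t(t-s)^{\frac\gamma2-1}\,ds\lesssim C_\omega T^{\gamma/2}\|y\|_{\mathcal{Z}_T^\gamma}^2$, uniformly in $t\in[0,T]$.

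For the space-time piece the naive smoothing kernel would be $\tau^{-(3-\gamma)/2}$, which is \emph{not} integrable when $\gamma<1$; hence the $L^2([0,T];\dH^{\frac32+\gamma})$-information contained in $\|y\|_{\mathcal{Z}_T^\gamma}$ must be exploited through a maximal-regularity argument rather than through Lemma \ref{C3S3lem3}. Localizing $F(y)$ in frequency, bounding $\|\dot{\Delta}_j F(y)(t)\|_{L^2}\lesssim\int_0^t e^{-c2^{2j}(t-s)}\|\dot{\Delta}_j g(s)\|_{L^2}\,ds$, and applying Young's inequality in time (the kernel $e^{-c2^{2j}\cdot}$ has $L^1_t$-norm $\simeq 2^{-2j}$) gives, after summation in $j$, the $T$-free estimate $\|F(y)\|_{L^2([0,T];\dH^{\frac32+\gamma})}\lesssim\|g\|_{L^2([0,T];\dH^{\gamma-\frac12})}$. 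To control $g$ and extract the prefactor, I would split the product symmetrically with both factors in $\dH^{1+\gamma/2}$: since $0<\gamma<1$ one has $1+\frac\gamma2<\frac32$, so the product law $\dH^{1+\gamma/2}\cdot\dH^{1+\gamma/2}\hookrightarrow\dH^{\frac12+\gamma}$ holds strictly inside its admissible range, and combined with the order-one bound for $Q$ and the boundedness of $\Gamma,\Gamma^{-1}$ it yields $\|g(s)\|_{\dH^{\gamma-\frac12}}\lesssim C_\omega\|y(s)\|_{\dH^{1+\gamma/2}}^2$. Interpolating the two endpoints of the $\mathcal{Z}_T^\gamma$-norm places $y\in L^{4/(1-\gamma)}([0,T];\dH^{1+\gamma/2})$ with norm $\lesssim\|y\|_{\mathcal{Z}_T^\gamma}$, whence $g\in L^{2/(1-\gamma)}([0,T];\dH^{\gamma-\frac12})$ with norm $\lesssim C_\omega\|y\|_{\mathcal{Z}_T^\gamma}^2$. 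Finally Hölder in time turns the integrability gap into the prefactor, $\|g\|_{L^2_t\dH^{\gamma-\frac12}}\le T^{\frac12-\frac{1-\gamma}2}\|g\|_{L^{2/(1-\gamma)}_t\dH^{\gamma-\frac12}}=T^{\gamma/2}\|g\|_{L^{2/(1-\gamma)}_t\dH^{\gamma-\frac12}}$, and feeding this into the maximal-regularity bound closes the space-time estimate.

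I expect the space-time piece to be the main obstacle. The difficulty is twofold: one must forgo the convenient $L^\infty_t$/Lemma \ref{C3S3lem3} route (whose kernel is non-integrable here) in favour of frequency-by-frequency maximal regularity, and one must choose the product split so that the homogeneous indices match \emph{exactly} — this is where the restriction $\gamma<1$ enters, keeping $1+\frac\gamma2$ strictly below the critical exponent $\frac32$ and keeping $\frac{2}{1-\gamma}$ finite so that the Hölder gap produces precisely $T^{\gamma/2}$. The bilinear estimate of the companion lemma (the bound on $F(y)-F(\bar y)$ needed for the contraction) will follow along identical lines using the bilinearity of $Q$.
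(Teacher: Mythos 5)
Your bound for the $L^\infty([0,T];\dH^{\frac12+\gamma})$ component is essentially the paper's argument: Lemma \ref{C3S3lem3} for the product in $\dB_{2,1}^{2\gamma-\frac12}$, the frequency-localized heat decay $e^{-c(t-s)2^{2j}}$, and the optimization over $j$ producing the integrable kernel $(t-s)^{\frac{\gamma}{2}-1}$, hence the factor $T^{\gamma/2}$ against $\|y\|^2_{L^\infty([0,T];\dH^{\frac12+\gamma})}$ and $\sup_t\eta_t$. For the space--time component you take a genuinely different, and correct, route. The paper does \emph{not} switch to maximal regularity there: it reruns the smoothing computation \eqref{C3S3eq5} with $\gamma$ replaced by $\gamma+1$, i.e.\ it puts the entire product at the top regularity, obtaining $\|e^{(t-s)\Delta}\Gamma^{-1}(\Gamma y\otimes\Gamma y)\|_{\dH^{\frac52+\gamma}}\lesssim\eta_s\|y(s)\|^2_{\dH^{\frac32+\gamma}}(t-s)^{\frac{\gamma-1}{2}}$ as in \eqref{C3S3eq6}, and then closes the $L^2_t$ estimate by a Young-type convolution argument: the kernel $(t-s)^{(\gamma-1)/2}$ is square-integrable on $[0,T]$ with $L^2$-norm of order $T^{\gamma/2}$, while $\|y(\cdot)\|^2_{\dH^{\frac32+\gamma}}$ is merely integrable in time. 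You instead use frequency-by-frequency maximal regularity (the $L^1_t$ kernel of size $2^{-2j}$), a symmetric product split at the intermediate index $1+\frac{\gamma}{2}$, interpolation between the two components of the $\mathcal{Z}_T^\gamma$-norm, and H\"older in time to extract $T^{\gamma/2}$. Both arguments exploit the $L^2([0,T];\dH^{\frac32+\gamma})$ information; they just distribute the burden differently between the kernel and the nonlinearity. A small advantage of your version is that the product law is only invoked at the index $1+\frac{\gamma}{2}<\frac32$, strictly inside the admissible range of the Sobolev product estimate, whereas the paper's ``replace $\gamma$ by $\gamma+1$'' applies Lemma \ref{C3S3lem3} at the index $\frac32+\gamma\ge\frac32$, outside the range in which that lemma is stated, so your route is the more robust of the two; the paper's is shorter since it avoids interpolation. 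Your final remark that the contraction estimate follows from bilinearity matches how the paper concludes, via Lemma \ref{55BCD}.
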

\begin{proof}
The key point of the proof is based on the contraction property of the 
semigroup $e^{t\Delta}$. 
By definition
\begin{equation*}
\begin{split}
\|F(y)(t)\|_{\dH^{\frac{1}{2}+\gamma}}&\leq \int_{0}^t \|e^{(t-s)\Delta}\Gamma^{-1}Q(\Gamma y(s), \Gamma y(s))\|_{\dH^{\frac{1}{2}+\gamma}} ds
\\
&\lesssim \int_{0}^t \|e^{(t-s)\Delta}\Gamma^{-1} (\Gamma y(s)\otimes \Gamma y(s))\|_{\dH^{\frac{3}{2}+\gamma}} ds,
\end{split}
\end{equation*}
where the second inequality is due to the reason that $Q(\Gamma y,\Gamma y)$ is the sum of 
first order derivatives of $\Gamma y\otimes \Gamma y$.
Moreover, by Lemma 2.4 of \cite{Bahouri2011Fourier}, there exists some constant $c$,
\beq\label{C3S3eq3}
\begin{split}
&\|\dot{\Delta}_j [e^{(t-s)\Delta} \Gamma^{-1} (\Gamma y\otimes \Gamma y)]\|_{L^2} \\
&\lesssim e^{-c(t-s)2^{2j}} \| \dot{\Delta}_j[ \Gamma^{-1} (\Gamma y\otimes \Gamma y)]\|_{L^2}\\
&\lesssim e^{-c(t-s)2^{2j}}\|\Gamma^{-1}(s)\|_{L(L^2,L^2)}2^{-j(2\gamma-\frac{1}{2})}d_j \|\Gamma y\otimes \Gamma y\|_{\dB_{2,1}^{2\gamma-\frac{1}{2}}}\\
&\lesssim e^{-c(t-s)2^{2j}}\|\Gamma^{-1}(s)\|_{L(L^2,L^2)}2^{-j(2\gamma-\frac{1}{2})}d_j \|\Gamma y \|_{\dH^{\frac{1}{2}+\gamma}}^2\\
&\lesssim \eta_s e^{-c(t-s)2^{2j}}2^{-j(2\gamma-\frac{1}{2})}\| y(s) \|_{\dH^{\frac{1}{2}+\gamma}}^2 d_j  ,
\end{split}
\eeq
where $d_j$ is a sequence in $\ell^1$, the third inequality is due to Lemma \ref{C3S3lem3} and the last inequality is due to the reason that from Remark \ref{C3S3lem4}, 
we know 
$$\|\Gamma(t)\|_{L(\dH^{\frac{1}{2}+\gamma}, \dH^{\frac{1}{2}+\gamma})}\leq \|\Gamma(t)\|_{L(L^2,L^2)}.
$$
Thus we have
$$\|e^{(t-s)\Delta}\Gamma^{-1} (\Gamma y\otimes \Gamma y)\|_{\dH^{\frac{3}{2}+\gamma}}
\lesssim \eta_{s} \| y(s) \|_{\dH^{\frac{1}{2}+\gamma}}^2
\sup\limits_j[e^{-c(t-s)2^{2j}}2^{j(2-\gamma)}].$$
Since there exists some constant $C(\gamma)$,
such that 
$$e^{-c(t-s)2^{2j}}\leq C(\gamma)[(t-s)2^{2j}]^{-1+\frac{\gamma}{2}},
$$
we obtain
\beq\label{C3S3eq5}
\|e^{(t-s)\Delta}\Gamma^{-1} (\Gamma y\otimes \Gamma y)\|_{\dH^{\frac{3}{2}+\gamma}}\lesssim \eta_{s} \| y(s) \|_{\dH^{\frac{1}{2}+\gamma}}^2(t-s)^{-1+\frac{\gamma}{2}}.
\eeq
Therefore,
\beq \label{C3S3eq4}
\|F(y)(t)\|_{L^\infty([0,T];\dH^{\frac{1}{2}+\gamma})}\lesssim T^{\frac{\gamma}{2}}
\| y(s) \|_{L^\infty([0,T];\dH^{\frac{1}{2}+\gamma})}^2\sup\limits_{t\geq 0}\eta_t.
\eeq
On the other hand,
\beq \label{C3S3eq7}
\|\nabla F(y)(t)\|_{\dH^{\frac{1}{2}+\gamma}}
\leq \int_{0}^t \|e^{(t-s)\Delta}\Gamma^{-1} (\Gamma y(s)\otimes \Gamma y(s))\|_{\dH^{\frac{5}{2}+\gamma}} ds.
\eeq
Following the same way that we obtain \eqref{C3S3eq5},  by replacing $\gamma$ by $\gamma+1$, we obtain
\beq\label{C3S3eq6}
\|e^{(t-s)\Delta}\Gamma^{-1} (\Gamma y(s)\otimes \Gamma y(s))\|_{\dH^{\frac{5}{2}+\gamma}}\lesssim \eta_{s} \| y(s) \|_{\dH^{\frac{3}{2}+\gamma}}^2(t-s)^{\frac{\gamma-1}{2}}.
\eeq
Therefore, by combining \eqref{C3S3eq7} and \eqref{C3S3eq6},
we deduce
\begin{equation*}
\begin{split}
\int_{0}^T \|\nabla F(y)(t)\|_{\dH^{\frac{1}{2}+\gamma}}^2dt
&\leq \int_{0}^T \Bigl( \int_{0}^t \eta_{s} \| y \|_{\dH^{\frac{3}{2}+\gamma}}^2(t-s)^{\frac{\gamma-1}{2}}ds\Bigr)^2dt\\
&\lesssim \sup\limits_{t\geq 0}\eta_t^2
\int_{0}^T    \bigl(  \int_{0}^t  \| y(s) \|_{\dH^{\frac{3}{2}+\gamma}}^2 ds\  t^{\frac{\gamma-1}{2}}     \bigr)^2  dt\\
&\lesssim \sup\limits_{t\geq 0}\eta_t^2 \int_{0}^T t^{\gamma-1}dt
 \int_{0}^T \| y(t) \|_{\dH^{\frac{3}{2}+\gamma}}^2 dt\\
 &\lesssim \sup\limits_{t\geq 0}\eta_t^2\   T^\gamma 
 \int_{0}^T \| y(t) \|_{\dH^{\frac{3}{2}+\gamma}}^2 dt.
\end{split}
\end{equation*}
That is,
\beq \label{C3S3eq8}
\|\nabla F(y)(t)\|_{L^2([0,T];\dH^{\frac{1}{2}+\gamma})}\lesssim T^{\frac{\gamma}{2}}
\| \nabla y(s) \|_{L^2([0,T];\dH^{\frac{1}{2}+\gamma})}^2\sup\limits_{t\geq 0}\eta_t.
\eeq
The conclusion follows directly from \eqref{C3S3eq4} and \eqref{C3S3eq8}.

\end{proof}
\

\begin{lemma}\label{heatequation}
{\sl For any $T>0$ and $0<\gamma<1$,   
$$\| e^{t\Delta}u_0\|_{\mathcal{Z}_T^\gamma}\leq \|u_0\|_{\dH^{\frac{1}{2}+\gamma}}.
$$

}

\end{lemma}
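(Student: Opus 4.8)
The plan is to work entirely on the Fourier side, since both pieces of the $\mathcal{Z}_T^\gamma$-norm diagonalize under $\mathcal{F}$. Recall that $\widehat{e^{t\Delta}u_0}(\xi)=e^{-t|\xi|^2}\widehat{u_0}(\xi)$ and that $\|v\|_{\dH^{s}}^2=\int_{\R^3}|\xi|^{2s}|\widehat v(\xi)|^2\,d\xi$. Thus the task splits into controlling the two summands of $\|e^{t\Delta}u_0\|_{\mathcal{Z}_T^\gamma}^2$ separately by $\|u_0\|_{\dH^{\frac12+\gamma}}^2$.

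For the first (supremum) term I would simply observe that $e^{-2t|\xi|^2}\le 1$ for every $t\ge0$ and every $\xi$, so that for each fixed $t\in[0,T]$ one has $\|e^{t\Delta}u_0\|_{\dH^{\frac12+\gamma}}^2=\int_{\R^3}|\xi|^{1+2\gamma}e^{-2t|\xi|^2}|\widehat{u_0}(\xi)|^2\,d\xi\le\|u_0\|_{\dH^{\frac12+\gamma}}^2$. Taking the supremum over $[0,T]$ (the maximum being attained at $t=0$) gives the bound for the $L^\infty([0,T];\dH^{\frac12+\gamma})$ part.

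For the gradient term the key is the parabolic smoothing encoded in the time integral. Since $|\widehat{\nabla v}(\xi)|^2=|\xi|^2|\widehat v(\xi)|^2$, Tonelli's theorem lets me exchange $\int_0^T dt$ with $\int_{\R^3}d\xi$ and reduces the estimate to the scalar integral $\int_0^T e^{-2t|\xi|^2}\,dt=\frac{1-e^{-2T|\xi|^2}}{2|\xi|^2}\le\frac{1}{2|\xi|^2}$. The factor $|\xi|^{-2}$ cancels exactly against the $|\xi|^2$ produced by the gradient, leaving $\int_0^T\|\nabla e^{t\Delta}u_0\|_{\dH^{\frac12+\gamma}}^2\,dt\le\tfrac12\|u_0\|_{\dH^{\frac12+\gamma}}^2$; note that this bound is uniform in $T$, which is the reason $e^{t\Delta}u_0\in\mathcal{Z}_T^\gamma$ for every $T>0$.

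Adding the two contributions yields $\|e^{t\Delta}u_0\|_{\mathcal{Z}_T^\gamma}^2\le\tfrac32\|u_0\|_{\dH^{\frac12+\gamma}}^2$. There is no genuine obstacle here, as this is a routine heat-kernel computation; the only point deserving care is the numerical constant. The bare sum of the two pieces produces the factor $\tfrac32$ rather than $1$, so the clean inequality as stated should be read up to an absolute constant. This is harmless, since Lemma \ref{C3S3lem2} and the subsequent fixed-point argument only invoke the estimate in the scale-invariant form $\|e^{t\Delta}u_0\|_{\mathcal{Z}_T^\gamma}\lesssim\|u_0\|_{\dH^{\frac12+\gamma}}$.
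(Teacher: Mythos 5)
Your Fourier-side computation is essentially the paper's own argument in explicit form: the paper simply takes the $\dH^{\frac12+\gamma}$ inner product of the heat equation with $u$, and the resulting energy identity $\|u(t)\|_{\dH^{\frac12+\gamma}}^2+2\int_0^t\|\nabla u\|_{\dH^{\frac12+\gamma}}^2\,ds=\|u_0\|_{\dH^{\frac12+\gamma}}^2$ is exactly your two pointwise-in-$\xi$ estimates integrated against $|\xi|^{1+2\gamma}|\widehat{u_0}(\xi)|^2\,d\xi$. Your remark about the constant $\tfrac32$ is a fair observation that the paper's one-line proof glosses over, and you correctly note it is harmless since only the form $\lesssim\|u_0\|_{\dH^{\frac12+\gamma}}$ is used downstream.
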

\begin{proof}
The proof is trivial.  We write a simple proof here for completeness.\\
Set $v=e^{t\Delta}u_0$,
then $v$ is the solution of the following heat  equations
 \begin{equation}\label{C3heatequation}
\left\{
\begin{aligned}
\partial_t u&=\Delta u \\
u\mid_{t=0}&=u_{0}.
\end{aligned}
\right .
\end{equation}
Taking $\dH^{\frac{1}{2}+\gamma}$ inner product of both sides  
with $u$ immediately yields the result.

\end{proof}

The following  fixed point theorem  comes from Lemma 5.5 of \cite{Bahouri2011Fourier}:
\begin{lemma}\label{55BCD}
Let $E$ be a Banach space, $\mathcal{B}$ a continuous bilinear map from $E\times E$ to $E$,  and $\alpha$ a positive real number such that
$$\alpha <\frac{1}{4\|\mathcal{B}\|} \text{  with  }
\|\mathcal{B}\|:=\sup\limits_{\|u\|\leq 1, \|v\|\leq 1}\|\mathcal{B}(u,v)\|.
$$
For any $a$ in the ball $B(0,\alpha)$ (i.e., with center $0$ and radius 
$\alpha$) in $E$, a unique $x$ then exists in $B(0,2\alpha)$ such that
$$x=a+\mathcal{B}(x,x).$$
\end{lemma}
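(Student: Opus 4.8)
The plan is to recognize this as the classical Picard contraction principle for a quadratic map and to deduce it directly from the Banach fixed point theorem. Set $k:=4\alpha\|\mathcal{B}\|$, so the hypothesis $\alpha<\frac{1}{4\|\mathcal{B}\|}$ reads exactly $k<1$. For fixed $a$ with $\|a\|\leq\alpha$, I would introduce the map
$$
\Phi(x):=a+\mathcal{B}(x,x),
$$
and show that $\Phi$ is a contraction of the closed ball $\overline{B}(0,2\alpha)$ into itself. Since $E$ is a Banach space and $\overline{B}(0,2\alpha)$ is a closed (hence complete) subset, the contraction mapping theorem will then furnish a unique fixed point $x\in\overline{B}(0,2\alpha)$, and $\Phi(x)=x$ is precisely the equation $x=a+\mathcal{B}(x,x)$.

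The two estimates needed are both one-line consequences of bilinearity and the definition of $\|\mathcal{B}\|$. For the self-mapping property, if $\|x\|\leq 2\alpha$ then
$$
\|\Phi(x)\|\leq\|a\|+\|\mathcal{B}(x,x)\|\leq\alpha+\|\mathcal{B}\|\,(2\alpha)^2=\alpha+k\alpha<2\alpha,
$$
using $k<1$, so $\Phi$ sends $\overline{B}(0,2\alpha)$ into $B(0,2\alpha)$. For the contraction property, for $x,y\in\overline{B}(0,2\alpha)$ I would use the telescoping identity
$$
\Phi(x)-\Phi(y)=\mathcal{B}(x,x)-\mathcal{B}(y,y)=\mathcal{B}(x,x-y)+\mathcal{B}(x-y,y),
$$
whence
$$
\|\Phi(x)-\Phi(y)\|\leq\|\mathcal{B}\|\,\bigl(\|x\|+\|y\|\bigr)\,\|x-y\|\leq 4\alpha\|\mathcal{B}\|\,\|x-y\|=k\,\|x-y\|,
$$
which is a genuine contraction since $k<1$.

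With these two bounds in hand the conclusion is immediate from the Banach fixed point theorem: $\Phi$ has a unique fixed point in $\overline{B}(0,2\alpha)$, and the self-mapping estimate above even shows this fixed point lies in the open ball $B(0,2\alpha)$, so uniqueness holds there as well. I do not expect any real obstacle here; the only point requiring care is the bookkeeping of the constants, namely that the radius $2\alpha$ and the threshold $\frac{1}{4\|\mathcal{B}\|}$ are matched so that the single condition $4\alpha\|\mathcal{B}\|<1$ simultaneously guarantees invariance of the ball and contractivity. One should also note that continuity of $\mathcal{B}$ (equivalently $\|\mathcal{B}\|<\infty$) is exactly what makes these estimates finite, and that completeness of $E$ is the sole structural ingredient invoked.
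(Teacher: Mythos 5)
Your proof is correct and is the standard contraction-mapping argument for this quadratic fixed point lemma; the constants check out ($\|a\|\leq\alpha$ plus $\|\mathcal{B}\|(2\alpha)^2=k\alpha$ gives stability of the ball, and the telescoping identity gives the Lipschitz constant $k=4\alpha\|\mathcal{B}\|<1$). The paper itself offers no proof --- it simply cites Lemma 1.5.5 of Bahouri--Chemin--Danchin --- and your argument coincides with the one given there, so there is nothing to reconcile.
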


\noindent
\textbf{Proof of   Theorem \ref{C3S3thm1}      }\\
The result comes immediately by Lemma \ref{C3S3lem2},
Lemma \ref{heatequation} and Lemma \ref{55BCD}.

\qed

\appendix

\section{the Completeness of the Normed Space $\mathcal{Z}_T^\gamma$ }\label{A}
We prove the completeness of the space $\mathcal{Z}_T^\gamma$ when
$0<\gamma<1$.\\
Firstly,  recall that the norm of $\mathcal{Z}_T^\gamma$ is:
$$\|u\|_{\mathcal{Z}_{T}^{\gamma}}^2:=\|u\|_{L^\infty([0,T];\dH^{\frac{1}{2}+\gamma})}^2+\int_{0}^T\|\nabla u(t)\|_{\dH^{\frac{1}{2}+\gamma}}^2dt.
$$
Let $\{u_n\}_{n\geq 1}$ be a Cauchy sequence in $\mathcal{Z}_T^{\gamma}$,
which means $\{u_n\}_{n\geq 1}$  is a Cauchy sequence in $L^\infty([0,T];\dH^{\frac{1}{2}+\gamma})$ and $\{\nabla u_n\}_{n\geq 1}$ is a Cauchy sequence in $L^2([0,T];\dH^{\frac{1}{2}+\gamma})$.

Our aim is to find some $u\in \mathcal{Z}_T^\gamma $, such that 
$u_n$ converges to $u$ in the norm of  $\mathcal{Z}_T^{\gamma}$.
By Prop 1.34 of \cite{Bahouri2011Fourier}, we know that when 
$0<\gamma<1$,  the homogeneous Sobolev space $\dH^{\frac{1}{2}+\gamma}$
is a Hilbert space. 
Therefore,  both $L^\infty([0,T];\dH^{\frac{1}{2}+\gamma})$
and $L^2([0,T];\dH^{\frac{1}{2}+\gamma})$ are complete.
Therefore,  there exists $v_1$ and $v_2$,  such that
$$u_n\longrightarrow v_1 \text{  in    }   L^\infty([0,T];\dH^{\frac{1}{2}+\gamma}),$$
and 
$$\nabla u_n\longrightarrow v_2 \text{  in    }   L^2([0,T];\dH^{\frac{1}{2}+\gamma}).$$
Now note that if we can prove that  $\nabla v_1(t)=v_2(t)$ for $a.e. t\in[0,T]$,
(of course  the derivatives mean weak derivatives, which can be at least defined for 
those $v_1(t)$ which satisfy $v_1(t)\in \dH^{\frac{1}{2}+\gamma}$ and  the derivatives  are
at least in the space $\mathcal{S}'$),\\
then we immediately  have 
$$\nabla u_n\longrightarrow \nabla v_1 \text{  in    }   L^2([0,T];\dH^{\frac{1}{2}+\gamma}),$$
hence 
$ v_1\in  \mathcal{Z}_T^\gamma$ and 
$u_n$ converges to $v_1$ in the norm of  $\mathcal{Z}_T^{\gamma}$.\\
\textbf{To prove:  $\nabla v_1(t)=v_2(t)$ for $a.e. t\in[0,T]$:}\\
Since $L^p$ convergence implies convergence in probability, hence we can find a subsequence $n_k$,
such that 
$$u_{n_k}(t)\longrightarrow v_1(t) \text{  in    }   \dH^{\frac{1}{2}+\gamma} a.e.    , $$
and 
$$\nabla u_{n_k}(t)\longrightarrow v_2(t) \text{  in    }   \dH^{\frac{1}{2}+\gamma} a.e.  .$$
Therefore,   there exists a subset $A$ of the interval $[0,T]$ which has the full Lebesgue measure $T$, such that for any $t\in A$,
$$u_{n_k}(t)\longrightarrow v_1(t) \text{  in    }   \dH^{\frac{1}{2}+\gamma}  , $$
and 
$$\nabla u_{n_k}(t)\longrightarrow v_2(t) \text{  in    }   \dH^{\frac{1}{2}+\gamma} .$$
Then for any $\phi\in (C_c^{\infty}(\R^3))^3$, and $t\in A$, 
we have 
\begin{equation*}
\begin{split}
\langle  v_2(t),  \phi \rangle&=\lim\limits_{k\rightarrow \infty}
\langle \nabla u_{n_k}(t),  \phi \rangle\\
&=-\lim\limits_{k\rightarrow \infty}
\langle  u_{n_k}(t), \dive \phi \rangle\\
&=-\langle   v_1(t), \dive \phi \rangle\\
&=\langle   \nabla v_1(t),  \phi \rangle,
\end{split}
\end{equation*} 
where same as before,   derivatives mean weak derivatives,  $\langle \cdot,  \cdot \rangle$ is the
 duality bracket and the second and the  fourth equalities are due to the definition of the weak derivatives.
Therefore,  we have shown $\nabla v_1(t)=v_2(t)$ for $a.e. t\in[0,T]$, which finishes our proof.

\bigbreak \noindent {\bf Acknowledgments.}  
S. Liang is grateful for the financial support from Deutsche Forschungsgemeinschaft (DFG) through the program IRTG 2235.
The author thanks Prof.  Dr.  Michael R\"{o}ckner for pointing out the precise 
way to state Definition  \ref{mildsolution} and Theorem \ref{C3S3thm1}.
\setcounter{secnumdepth}{-1}
\bibliographystyle{alpha}

\begin{thebibliography}{999}

 \bibitem{Bahouri2011Fourier}
  H. Bahouri, J.-Y. Chemin and R. Danchin,
{\it Fourier Analysis and Nonlinear Partial Differential Equations}, Grundlehren der mathematischen Wissenschaften, {\bf 343} Springer-Verlag Berlin Heidelberg, 2011.

\bibitem{BARBU20175395}
V~Barbu and M~R{\"o}ckner.
\newblock Global solutions to random 3D vorticity equations for small initial
  data.
\newblock {\em Journal of Differential Equations}, {\bf 263(9)} 5395 -- 5411, 2017.
%
%
%
%
%
%
%
%
%
%







\bibitem{chemin2019}
J. -Y. Chemin and  I. Gallagher, 
{A Nonlinear Estimate of the Life Span of solutions of the three  dimensional Navier--Stokes equations},
 {\it Tunisian J. Math.}, {1(2):273--293, 2019}.















\bibitem{lemarie2002recent}
P. G.  Lemari{\'e}-Rieusset, 
\newblock  Recent developments in the Navier--Stokes problem,
{\em CRC Press},  2002.

\bibitem{LIANG2020}
S.  Liang, P. Zhang, and R.  Zhu.
\newblock  Remark on the Lifespan of Solutions to 3-D Anisotropic Navier Stokes Equations.
\newblock {\em  Communications in Mathematical Research}, {\bf 36} 31-50, 2020.




\bibitem{Munteanu}
I.  Munteanu and  M.  R\"{o}ckner,
\newblock { Global solutions for random vorticity equations perturbed by gradient dependent noise, in two and three dimensions }.
\newblock {\em Journal of Evolution Equations}, 
 {\bf 20} 1173-1194,2020.


\bibitem{Poulon2014}
E.  Poulon
\newblock 
About the behaviour of regular Navier--Stokes solutions near the blow up, 
{\em Bull. Soc. Math. France},  {\bf 146}(2) 355-390,  2018.





\bibitem{Rockner2019}
M. R\"{o}ckner, R. C.  Zhu and X. C. Zhu, 
{A remark on global solutions to random 3D vorticity equations for small initial data},
 {\it  Discrete \& Continuous Dynamical Systems - B},
 {\bf  24(8)},
 4021-4030,2019.
%
%
%
%
%
%
%
%



     
	



		

      
		
		
	
		
		





    





















\end{thebibliography}

\end{document}